\documentclass[12pt]{article}
\usepackage{amscd,amsfonts,amsmath,amssymb,amstext,amsthm}

\title{Cycle space constructions for exhaustions of flag domains}

\author{Alan Huckleberry \& Joseph A. Wolf}

\date{file exhaust.tex dated July 12, 2008}
\sloppy
\parindent0ex
\pagenumbering {arabic}

\theoremstyle{plain}
\newtheorem{theorem} {Theorem} [section]
\newtheorem{lemma} [theorem]{Lemma}
\newtheorem{proposition}[theorem]{Proposition}

\theoremstyle{definition}

\begin{document}

\maketitle

\section {Introduction}\label{sec1}

In the study of complex flag manifolds, flag domains and their cycle spaces,
a key point is the fact that the cycle space $\mathcal M_D$ of a flag domain 
$D$ is a Stein
manifold.  That fact has a long history; see \cite{FHW}.  The earliest 
approach (\cite{WeW}, \cite{W8}) relied on construction of a strictly
plurisubharmonic exhaustion function on $\mathcal M_D$, starting with
a $q$--convex exhaustion function on $D$, where $q$ is the dimension of a
particular maximal compact subvariety of $D$ (we use the normalization that
$0$--convex means Stein).  Construction of that exhaustion function on $D$
\cite{SW} required that $D$ be measurable \cite{W2}.  In that case the 
exhaustion on $D$ was transferred to $\mathcal M_D$, using a special
case of a method due to
Barlet \cite{B}.  Here we do the opposite: we use the incidence method of
\cite{FHW} to construct a canonical plurisubharmonic exhaustion function on
$\mathcal M_D$ and use it in turn to construct a canonical $q$--convex 
exhaustion function on $D$.  This promises to have strong
consequences for cohomology vanishing theorems and the construction of
admissible representations of real reductive Lie groups.  It also
completes the argument of \cite{W9}.
\medskip

We remark that our construction only produces a continuous exhaustion
of the cycle space $D$.  However, that exhaustion is $q$--pseudoconvex
in the strong sense that it is optimally locally approximated by a
smooth $q$--convex function.  (See Section \ref{sec5} for details.)
Assuming that the Andreotti--Grauert theory of pseudoconvexity and
cohomology vanishing \cite{AnG} remains valid for such exhaustions
there will be direct applications to the geometric construction and 
analysis of admissible representations of real reductive Lie groups.
\medskip

In this context we want to draw the reader's attention to the construction
of Kr\" otz and Stanton (\cite{KS1}, \cite{KS2}) for plurisubharmonic 
exhaustion functions on $\mathcal M_D$ associated to spherical 
representations and their positive definite spherical functions.
\medskip

Now we establish the basic notation of this paper.
Let $G$ be a connected, simply connected complex semisimple Lie group,
$\mathfrak g$ its Lie algebra, $\mathfrak g_0$ a real form of
$\mathfrak g$, and $G_0$ the corresponding real form of $G$.  Fix a
parabolic subgroup $Q \subset G$ and let $Z$ denote the complex flag manifold
$G/Q$.  Then $G_0$ has only finitely many orbits on Z, in particular has
open orbits \cite{W2}.  By {\em flag domain} we mean an open orbit 
$D = G_0(z_0)$.
\medskip

Fix a maximal compact subgroup $K_0 \subset G_0$, let $K \subset G$ denote
its complexification, and let $\mathfrak k_0$ and $\mathfrak k$ denote
their respective Lie algebras.  There is a unique $K_0$--orbit (call it
$C_0$) in the flag domain $D$ that is a complex submanifold of $D$.  
Replacing $K_0$ by a $G_0$--conjugate if necessary we may assume that
this {\em base cycle} $C_0 = K_0(z_0)$, and then $C_0 = K(z_0)$.
The {\em cycle space} $\mathcal M_D$ is the topological component of
$C_0$ in $\{gC_0 \mid g \in G \text{ and } gC_0 \subset D\}$.
\medskip

Let $\theta$ denote the Cartan involution of $G_0$ with fixed point
set $K_0$, and decompose $\mathfrak g_0 = \mathfrak k_0 + \mathfrak s_0$ 
into $(\pm1)$--eigenspaces of $d\theta$.  Then $\mathfrak g_u :=
\mathfrak k_0 +\sqrt{-1}\,\mathfrak s_0$ is a real subalgebra, in fact a
real form, of $\mathfrak g$.  The corresponding analytic subgroup
$G_u \subset G$ is a compact real form of $G$.
\medskip

We view the base cycle $C_0$ as a point in the complex space 
$\mathcal C_q(Z)$ of $q$--dimensional cycles in the projective algebraic
variety $Z$. There the group $G$ acts
algebraically as a group of holomorphic transformations, and
consequently the orbit $\mathcal M_Z:=\{gC_0\mid g\in G\}$
is a complex $G$--homogeneous space.  Since $D$ is open in $Z$,
the subspace $\mathcal M_Z\cap \mathcal C_q(D)$ (cycles that
are contained in $D$) is open in $\mathcal M_Z$, and 
$\mathcal M_D$ is its connected component containing $C_0$.
In other words, $\mathcal M_D$ is the complex submanifold
$\{gC_0 \mid g \in G \text{ and } gC_0 \subset D\}$ of $\mathcal C_q(D)$
as well as an open submanifold of $\mathcal M_Z$.  In fact $\mathcal M_D$ 
is closed in $\mathcal C_q(D)$ [HH].
As mentioned above, it is a Stein manifold.
\medskip

The flag domain $D$ and its cycle space $\mathcal M_D$ fit into a {\em 
holomorphic double fibration} as follows.  The {\em incidence space}
$\mathfrak X_D := \{(z,C) \in D\times \mathcal M_D \mid z \in C\}$
is a complex submanifold of $D\times \mathcal M_D$ and the projections
are holomorphic fibrations:
$$
\setlength{\unitlength}{.08 cm}
\begin{picture}(150,18)
\put(81,15){$\mathfrak X_D$}
\put(70,12){$\mu$}
\put(66,1){$D$}
\put(90,12){$\nu$}
\put(95,1){$\mathcal M_D$}
\put(78,13){\vector(-1,-1){6}}
\put(87,13){\vector(1,-1){6}}
\end{picture}
$$
Note that $\mu$ is a holomorphic submersion and $\nu$ is a proper holomorphic
map which is a locally trivial bundle.

\section {Incidence geometry and Schubert fibrations}
\label {incidence geometry} \label{sec2}

Here we summarize several methods and results from our previous work which 
will be of use in the present note. These were
systematically presented in \cite {FHW}.  The setting, as just described,
is that of an open $G_0$--orbit $D$ in a complex flag manifold
$Z=G/Q$.  The maximal compact subgroup $K_0$ and the base cycle $C_0$
are constructed as above, and $q$ denotes the complex dimension of $C_0$.
\medskip

Given an Iwasawa decomposition $G_0=K_0A_0N_0$, we write $A$ and $N$ for the
complexifications of $A_0$ and $N_0$, and we consider Borel subgroups 
$B$ of $G$ that contain the solvable subgroup $AN$.  Those are the
{\em Iwasawa Borel} subgroups.  Any two Iwasawa decompositions 
$G_0=K_0A_0N_0$ and $G_0=K_0A'_0N'_0$ are $K_0$--conjugate, and it follows
that any two Iwasawa--Borel subgroups (for the same choice of $K_0$) are
$K_0$--conjugate.
\medskip

If $B \subset G$ is any Borel subgroup and $z\in Z$,
then as usual we refer to the closure $S := \mathrm {c}\ell (B(z))$
as the \emph{Schubert variety} associated to $B$ and $z$.
The orbit $\mathcal O_S:=B(z)$, which is Zariski open in
$S$, is the associated \emph{Schubert cell}.  The \emph{incidence
variety} $H_S\subset \mathcal M_Z$ consists of those cycles
$gC_0$, $g\in G$, which have nonempty intersection with the
boundary $B_S:=S\setminus \mathcal O_S$ of the Schubert cell
in the Schubert variety.  Below we will consider special
points $z\in C_0$ where, when $B$ an Iwasawa--Borel subgroup relative
to a decomposition $G_0=K_0A_0N_0$, the orbit $\Sigma :=A_0N_0(z)$ is 
exactly the connected component of $S\cap D$ that contains $z$.  In that
setting we refer to $\Sigma $ as a \emph{Schubert slice}.
\medskip

Since $C_0$ defines a nonzero class in $H_{2q}(Z,\mathbb Z)$, it has 
nonempty intersection with at least one $q$--codimensional $B$--Schubert 
variety $S$.  Under the additional assumption that $B\supset AN$, such
Schubert varieties have the following properties 
(see \cite {FHW}, Chapter II.7):
\begin {itemize}
\item
The intersection $C_0 \cap S$ consists of finitely many
points $z_1,\ldots ,z_m$.
\item
These points are contained in the intersection 
$D\cap \mathcal O_S$ of $D$ with the Schubert cell (open $B$--orbit in $S$).
\item
For $1 \leqq j \leqq m$ the connected component $\Sigma _j$
of $S\cap D=\mathcal O_S\cap D$ which contains $z_j$
is the Schubert slice $A_0N_0(z_j)$.
\item
Every cycle $C\in \mathcal M_D$ intersects each Schubert slice  $\Sigma _j$ in
exactly one point.
\item
If $\Sigma $ is a Schubert slice then the incidence
map $\pi _{_\Sigma} : \mathcal M_D\to \Sigma $, which associates to 
$C$ its point of intersection with $\Sigma $, is an $A_0N_0$--equivariant 
holomorphic map that is a $C^\infty$-fiber bundle. 
\item
The incidence variety $\mathcal H_S$ is a ($B$--invariant) complex 
algebraic hypersurface in $\mathcal M_Z\setminus \mathcal M_D$.  
\end {itemize}

If $C\in \mathcal M_D$ and $C\cap \Sigma = \{z\}$ 
then both the $\mu $--fiber $\mu ^{-1}(z)$ of the double fibration
and the $\pi _{_\Sigma}$--fiber through $C$ defined just above
can be identified with the set of all cycles which contain $z$. 
\medskip

Starting with $S$ and $H_S$ as above, we consider
the union of all hypersurfaces $k(H_S)$ as $k$ runs over
the compact group $K_0$.  It is a basic fact that every
boundary point of $\mathcal M_D$ in $\mathcal M_Z$ is
contained in some such $k(H_S)$.  This is formulated in
\cite {FHW} as follows. The union of the $k(H_S)$ as
$k$ runs over $K_0$ is a closed subset of $\mathcal M_Z$
and the connected component of its complement containing
$\mathcal M_D$ is a $G_0$--invariant, Kobayashi hyperbolic,
Stein domain $\mathcal E(H_S)$ (see \cite[Chapter 11]{FHW}).
The desired result is \cite[Theorem 11.3.1]{FHW}; it 
states that $\mathcal M_D$ is the maximal $G_0$-invariant,
Kobayashi hyperbolic, Stein domain in $\mathcal M_Z$ which contains
the base cycle $C_0$. This implies $\mathcal M_D=\mathcal E(H_S)$.
\medskip

For our purposes it is useful to formulate the above fact
as follows. For every cycle $C$ in the boundary of $\mathcal M_D$
in $\mathcal M_Z$ there exist a Schubert slice $\Sigma $ and an 
element $k \in K_0$
so that $C$ is both in the boundary of $k(\Sigma )$ and
the boundary $\mathcal B_{k(S)}$ of $\mathcal O_{k(S)}$ in $k(S)$.
\medskip

Of course it is possible for a sequence $\{C_n\}$ to
be divergent in $\mathcal M_D$ without converging to
a boundary point $C\in \mathcal M_Z$. But in this case,
after taking a subsequence, we may regard it as converging
to a point $C$ in the ``wonderful compactification''
of $\mathcal M_Z$.  Even in that case one observes that
$C$ is in $\mathcal B_{k(S)}$, i.e. is in the boundary of 
some $k(\Sigma )$ (\cite {HH}).

\section {Exhaustions of Schubert cells}\label{sec3}

As above $Z$ denotes the complex flag manifold $G/Q$ where $G$
is a connected, simply connected, complex semisimple group and
$Q$ is a parabolic subgroup. Thus every holomorphic line bundle
$\mathbb L\to Z$ is canonically a $G$--bundle.  We now assume that
$\mathbb L$ is positive.  Then the action of $G$ on the space
$\Gamma(Z,\mathbb L)$ of holomorphic sections is an irreducible
finite dimensional representation, say with highest weight $\lambda$.

\medskip
Since $\mathbb L$ is a line bundle, the isotropy subgroups of $G_u$ are
irreducible on its fibers, so $\mathbb L\to Z$ has a unique (up to 
multiplication by a positive real constant) $G_u$--invariant hermitian
metric. We write $\Vert \cdot \Vert$ for the
associated norm function on sections of $\mathbb L\to Z$.
\medskip

Given a section $s\in \Gamma (Z,\mathbb L)$ the
function $\eta _s:=-\mathrm{log} \Vert s\Vert ^2$ 
is a strictly plurisubharmonic exhaustion of the complement
of the zero set $\{s=0\}$ in $Z$. 
\medskip

Now let $B$ be a Borel subgroup of $G$ and 
$S=\mathcal O_S\dot \cup B_S$ a $B$--Schubert variety.
Let $\iota _{_S}: S \to Z$ be the canonical embedding and
$V_S:=\iota ^*(\Gamma (Z,\mathbb L))$ the space of restricted (to $S$)
sections. Since $V_S$ is the representation space for a rational
representation of $B$, we
have $B$--eigenvectors $s\in V_s$.  If $U$ is the unipotent
radical of $B$, then any two such eigenvectors 
are $U$-invariant and their ratio is a $U$-invariant
meromorphic function on $S$. Since $U$ has an open orbit
in $S$, that function is constant, so the two eigenvectors 
are linearly dependent.

\begin {proposition}
Let $\mathbb L\to Z$ be a positive holomorphic line bundle, $B\subset G$
a Borel subgroup, and $S \subset Z$ a $B$--Schubert variety.
Then any nonzero $B$--eigenvector $s\in V_S$
defines a strictly plurisubharmonic exhaustion 
$r_{_S}:=\eta _s\vert_{\mathcal O_S}$ of the open $B$--orbit in $S$
\end {proposition}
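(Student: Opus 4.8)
The plan is to handle the two assertions separately: strict plurisubharmonicity will be inherited from the corresponding statement on all of $Z$, while the exhaustion property will be reduced to the sharp identity $\{s=0\}\cap S=B_S$. For the first point, choose $\tilde s\in\Gamma(Z,\mathbb L)$ with $\iota_S^*\tilde s=s$; since the $G_u$--invariant metric on $\mathbb L\to Z$ restricts to the metric used on $\mathbb L|_S$, we have $\eta_s=\eta_{\tilde s}|_S$, hence $r_{_S}=\eta_{\tilde s}|_{\mathcal O_S}$. We already know that $\eta_{\tilde s}$ is a smooth strictly plurisubharmonic function on $Z\setminus\{\tilde s=0\}$. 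The Schubert cell $\mathcal O_S$ is a smooth, locally closed complex submanifold of $Z$, and it is disjoint from $\{\tilde s=0\}$: indeed $\{s=0\}$ is $B$--invariant, so it is a $B$--invariant closed subvariety of $S$, and such a set either contains the dense orbit $\mathcal O_S$ --- impossible, as then $s\equiv 0$ --- or is disjoint from it. Thus $\mathcal O_S$ is a complex submanifold of $Z\setminus\{\tilde s=0\}$, and restricting the strictly plurisubharmonic $\eta_{\tilde s}$ to it keeps it strictly plurisubharmonic; this restriction is $r_{_S}$.

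For the exhaustion property I would prove $\{s=0\}\cap S=B_S$. Granting that, any sequence in $\mathcal O_S$ leaving every compact subset of $\mathcal O_S$ has, since $S$ is compact, a subsequence converging to a point of $S\setminus\mathcal O_S=B_S=\{s=0\}\cap S$, along which $\Vert s\Vert\to 0$ and hence $r_{_S}=-\log\Vert s\Vert^{2}\to+\infty$; so every sublevel set of $r_{_S}$ is compact in $\mathcal O_S$. To prove the identity, set $W:=\{z\in S\mid s(z)\neq 0\}$, a Zariski--open subset of $S$ containing $\mathcal O_S$ by the previous paragraph. Two properties of $W$ are available. First, $\mathbb L|_S$ is positive because $\mathbb L$ is, so $W$, being the nonvanishing locus of a (nonzero) section of an ample line bundle on the projective variety $S$, is affine. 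Second, $s$ is a $U$--eigenvector, where $U$ is the unipotent radical of $B$, so $\{s=0\}$ and hence $W$ is $U$--invariant. Now recall that $\mathcal O_S$ is a single $U$--orbit, namely the open $U$--orbit in $S$. By the Kostant--Rosenlicht theorem an orbit of a unipotent group acting on an affine variety is Zariski--closed, so $\mathcal O_S$ is closed in $W$; being also dense in $S$, and a fortiori in the open subset $W$, it must coincide with $W$. Hence $\{s=0\}\cap S=S\setminus\mathcal O_S=B_S$.

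The main obstacle is exactly this last identity, and its heart is the Kostant--Rosenlicht step. What it rules out is that $\{s=0\}$ could be a proper $B$--invariant subvariety of $B_S$, which would make $W$ strictly larger than $\mathcal O_S$ and would destroy the exhaustion property; note that a $B$--invariant affine open can in general strictly contain the open $B$--orbit, so the argument genuinely uses the unipotence of $U$ (a $U$--orbit cannot be a dense proper subset of an affine variety) together with the positivity of $\mathbb L$. Everything else is routine: the reduction of strict plurisubharmonicity to the statement over $Z$, and the deduction of compactness of sublevel sets from compactness of $S$.
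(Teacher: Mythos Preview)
Your proof is correct and follows the same reduction as the paper: once one knows that $\eta_s$ is strictly plurisubharmonic off the zero set, everything hinges on the identity $\{s=0\}\cap S=B_S$. The paper simply cites \cite[Corollary 7.4.9]{FHW} for this, whereas you supply a self-contained argument---using that the nonvanishing locus of a section of an ample bundle is affine, that $\mathcal O_S$ is a single $U$--orbit, and the Kostant--Rosenlicht theorem---which is a nice enrichment but not a different strategy.
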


\begin {proof}
It remains only to show that $s$ vanishes exactly on $B_S$.
For this see \cite[Corollary 7.4.9]{FHW}.
\end {proof}

It should be underlined that having fixed $\mathbb L$ and the maximal
compact subgroup $G_u$, the exhaustion function $r_{_S}$ is 
unique.

\section {The minimax construction}\label{sec4}

Here we first construct continuous plurisubharmonic exhaustions 
$r_{_{\mathcal M_D}} $ of $\mathcal M_D$ by taking the 
supremum of lifted exhaustions $r_{_S} \circ \pi _{_\Sigma} $ 
defined by natural families of Schubert fibrations 
$\pi _{_\Sigma} :\mathcal M_D\to \Sigma \hookrightarrow S$.
We then lift $r_{_{\mathcal M_D}}$ to the universal family
$\mathfrak X_D$ and push this function down to a continuous $q$--convex
exhaustion $r_{_D}$ of $D$ by taking the infimum over the
fibers of $\mu :\mathfrak X_D\to D$.

\subsection {Lifting Schubert exhaustions}

As in $\S\ref {incidence geometry}$ let $B$ be a Borel
subgroup of $G$ which contains an Iwasawa component $AN$,
and let $S$ be an $(n-q)$--dimensional $B$--Schubert variety
which has nonempty intersection with the base cycle $C_0$.
Recall that the intersection $S\cap D$ is contained in
$\mathcal O_S$ and its components $\Sigma $ are orbits $A_0N_0(z)$
of the intersection points $z\in S\cap C_0$.
\medskip

We fix a Schubert slice $\Sigma $.  If $C \in \mathcal M_D$ then
$C\cap\Sigma$ consists of a single point.  Thus $\Sigma$ defines
a fibration $\pi_{_\Sigma}: \mathcal M_D \to \Sigma $ by 
$C\mapsto C\cap \Sigma$.  The relevant family of 
Schubert varieties is defined by allowing $B$ to vary
under the condition that it contains $AN$.  In fact
this is just the closed $G_0$--orbit in $G/B$, in other words the orbit 
on which $K_0$ acts transitively.  Hence the family of Schubert
varieties determined by some initial choice $S_0$ is
just $\{k(S_0)\mid k\in K_0\}$.  Thus we are interested
in the family $\{k(\Sigma )\mid k\in K_0\}$ of Schubert slices.
\medskip

If the exhaustion $r_{_S}$ of $S$ is defined as above
by the section $s$, then the exhaustion $r_{_{k(S)}}$ of $k(S)$
is defined by $k(s)$. Hence 
\begin {equation}\label {translation}
r_{_{k(S)}}\circ \pi_{_{k(\Sigma )}}(C)=r_{_S}\circ \pi _{_\Sigma} (k(C))\,.
\end {equation}
Provisionally define $r_{_{\mathcal M_D}}$ to be the
supremum of the plurisubharmonic functions
$r_{_{k(S)}}\circ \pi_{_{k(\Sigma )}}$.  Since $\Sigma $ is just one
connected component of $S\cap D$, the final definition of
$r_{_{\mathcal M_D}}$ is given by maximizing over all such components.

\begin {proposition}
The function $r_{_{\mathcal M_D}}: \mathcal M_D \to \mathbb R$ is
a $K_0$--invariant continuous plurisubharmonic exhaustion.
\end {proposition}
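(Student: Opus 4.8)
The plan is to rewrite $r_{_{\mathcal M_D}}$ in a form from which all four required properties can be read off. By the translation identity \eqref{translation} together with the $G_u$--invariance of $\Vert\cdot\Vert$ (and $K_0\subset G_u$) one has
$$
r_{_{\mathcal M_D}}(C)\;=\;\max_{1\leqq j\leqq m}\ \sup_{k\in K_0}\ r_{_S}\bigl(\pi_{_{\Sigma_j}}(kC)\bigr)\,,
$$
where $\Sigma_1,\dots,\Sigma_m$ are the finitely many components of $S\cap D$. For fixed $C$ the function $(k,C')\mapsto r_{_S}(\pi_{_{\Sigma_j}}(kC'))$ is continuous on the compact set $K_0\times\{C\}$, so the inner supremum is finite; running this over $K_0\times L$ with $L\subset\mathcal M_D$ compact shows $r_{_{\mathcal M_D}}$ is locally bounded. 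The $K_0$--invariance is then immediate, since replacing $C$ by $k_0C$ only reparametrizes the supremum over the group $K_0$.

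Next I would settle continuity and plurisubharmonicity. The map $(k,C)\mapsto r_{_S}(\pi_{_{\Sigma_j}}(kC))$ is jointly continuous on $K_0\times\mathcal M_D$, because the $K_0$--action on $\mathcal M_D$ is continuous, each $\pi_{_{\Sigma_j}}\colon\mathcal M_D\to\Sigma_j$ is continuous (it is a $C^\infty$--fiber bundle, Section~\ref{incidence geometry}), and $r_{_S}$ is continuous on $\mathcal O_S\supset\Sigma_j$. Taking the supremum over the compact factor $K_0$ preserves continuity: it is lower semicontinuous as a supremum of continuous functions, and upper semicontinuous because any sequence $C_n\to C$ can be paired with maximizing elements $k_n\in K_0$, a subsequence of which converges, whereupon joint continuity applies; the finite maximum over $j$ then stays continuous. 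For plurisubharmonicity, for fixed $k$ and $j$ the function $C\mapsto r_{_S}(\pi_{_{\Sigma_j}}(kC))$ is the pullback of the strictly plurisubharmonic function $r_{_S}$ of Section~\ref{sec3} under the holomorphic map $C\mapsto\pi_{_{\Sigma_j}}(kC)$ of $\mathcal M_D$ into $\Sigma_j\hookrightarrow\mathcal O_S$, hence is plurisubharmonic; a locally bounded supremum of plurisubharmonic functions has plurisubharmonic upper semicontinuous regularization, and since the supremum over $K_0$ has just been shown to be continuous it coincides with its regularization and so is plurisubharmonic, as is then the finite maximum over $j$.

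The remaining step, which I expect to be the main obstacle, is the exhaustion property. I would show that every sublevel set $\{r_{_{\mathcal M_D}}\leqq c\}$, which is closed by continuity, is compact; if it were not, there would be a sequence $\{C_n\}$ in it leaving every compact subset of $\mathcal M_D$. By the structural facts recalled in Section~\ref{incidence geometry} — the boundary of $\mathcal M_D$ in $\mathcal M_Z$ lies in $\bigcup_{k\in K_0}k(\mathcal H_S)$ since $\mathcal M_D=\mathcal E(H_S)$, together with \cite{HH} for divergence visible only in the wonderful compactification of $\mathcal M_Z$ — after passing to a subsequence there are $k\in K_0$ and a component $\Sigma_j$ for which the intersection points $C_n\cap k(\Sigma_j)=\pi_{_{k(\Sigma_j)}}(C_n)$ converge to a point of the Schubert--cell boundary $\mathcal B_{k(S)}$ in $k(S)$. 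Since $\mathcal B_{k(S)}$ is exactly the zero set of the eigenvector $k(s)$ defining $r_{_{k(S)}}$ (\cite[Corollary 7.4.9]{FHW}), we get $\Vert k(s)\bigl(\pi_{_{k(\Sigma_j)}}(C_n)\bigr)\Vert\to 0$, hence $r_{_{k(S)}}\bigl(\pi_{_{k(\Sigma_j)}}(C_n)\bigr)\to+\infty$ along that subsequence, so $r_{_{\mathcal M_D}}(C_n)\to+\infty$, contradicting $r_{_{\mathcal M_D}}(C_n)\leqq c$. The delicate point I would argue with care is precisely this use of the boundary description of Section~\ref{incidence geometry}: that it produces, for the given divergent sequence rather than for some auxiliary one, a single pair $(k,\Sigma_j)$ whose intersection points actually run into $\mathcal B_{k(S)}$, and that the conclusion survives passage, in the worst case, to the wonderful compactification.
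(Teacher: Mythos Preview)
Your argument is correct and follows essentially the same route as the paper: both rewrite $r_{_{\mathcal M_D}}$ via the translation identity \eqref{translation} as a supremum over the compact group $K_0$ followed by a finite maximum, deduce continuity from compactness of $K_0$, plurisubharmonicity from the sup-of-psh principle, $K_0$--invariance from the form of the supremum, and the exhaustion property from the boundary description $\mathcal M_D=\mathcal E(H_S)$ together with the wonderful--compactification result of \cite{HH}. Your treatment is in fact a bit more careful than the paper's in one place: you note explicitly that a supremum of plurisubharmonic functions is only known to have plurisubharmonic \emph{upper regularization}, and then use the already-established continuity to conclude that the supremum equals its regularization; the paper states the conclusion without that caveat.
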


\begin {proof}
It is a general fact that the supremum of a family of 
plurisubharmonic functions is plurisubharmonic.  It follows
from $(\ref {translation})$ that the provisionally defined
function arises by maximizing a fixed continuous function 
over a compact group action. Hence that function is continuous
and, as the maximum of finitely many such functions, it
then follows that $r_{_{\mathcal M_D}}$ is continuous.  Since
we have taken the supremum over a $K_0$--invariant family
of functions, $r_{_{\mathcal M_D}}$ is also $K_0$--invariant.
\medskip

Finally, every boundary point of $\mathcal M_D$ in the  
affine homogeneous space $G/K$ is contained in some
$K_0$--translate $k(H_0)$ of the incidence hypersurface
$H_0$ defined by $B_{S_0}$ (\cite {FHW}; see Section 2 above).
This implies that if $\{C_n\}$ is a sequence in $\mathcal M_D$
which converges to such a boundary point, then
$r_{_{\mathcal M_D}}(C_n) \to  \infty $.  If the sequence
$C_n$ diverges in $G/K$, then we may assume that it
converges to a point in the wonderful compactification of $G/K$
which is also in such a translate $k(H_0)$ (\cite {HH}).  
Thus $r_{_{\mathcal M_D}}(C_n)\to \infty $ for any sequence 
$\{C_n\}$ which diverges in $\mathcal M_D$ and therefore
$r_{_{\mathcal M_D}}$ is an exhaustion.
\end {proof} 

\subsection {Transfer to the flag domain}

The map $\nu :\mathfrak X_D\to \mathcal M_D$ is proper;
in fact, it is a trivial fiber bundle with fiber $C_0$.
Thus the lift $r_{_{\mathfrak X_D}} := r_{_{\mathcal M_D}}\circ \nu $
is a proper exhaustion.  Transferring to the flag domain,
we define $r_{_D}: D \to \mathbb R$ by 
$$
r_{_D(y)} := \mathrm {inf}\{r_{_{\mathfrak X_D}}(x)\mid x \in \mu ^{-1}(y)\}\,.
$$

The following follows from the fact that $r_{_{\mathfrak X_D}}$
is proper and $\mu :\mathfrak X_D\to D$ is an open map.

\begin {lemma}
The function $r_{_D}: D \to \mathbb R$ is a continuous $K_0$--invariant
exhaustion.
\end {lemma}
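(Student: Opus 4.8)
The plan is to derive continuity, $K_0$--invariance, and the exhaustion property of $r_{_D}$ from the corresponding properties of $r_{_{\mathfrak X_D}}=r_{_{\mathcal M_D}}\circ\nu$, using the two structural facts about the incidence fibration: $\mu:\mathfrak X_D\to D$ is a holomorphic submersion (in particular open), and $\nu$ is a trivial proper fiber bundle with fiber $C_0$. First I would record that $r_{_{\mathfrak X_D}}$ is continuous (as the composite of the continuous $r_{_{\mathcal M_D}}$ with the holomorphic $\nu$) and proper: since $\nu$ is proper and $r_{_{\mathcal M_D}}$ is a proper exhaustion of $\mathcal M_D$, the preimage under $r_{_{\mathfrak X_D}}$ of a bounded set $[a,b]$ equals $\nu^{-1}$ of the compact set $r_{_{\mathcal M_D}}^{-1}[a,b]$, hence is compact.

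For the exhaustion property of $r_{_D}$, fix $c\in\mathbb R$ and consider $\{y\in D\mid r_{_D}(y)\le c\}$. For each such $y$ the infimum defining $r_{_D}(y)$ is over the set $\mu^{-1}(y)$; I would first note that each fiber $\mu^{-1}(y)$ is nonempty (every point of $D$ lies on some cycle in $\mathcal M_D$) and that the infimum is actually attained, because $r_{_{\mathfrak X_D}}$ is proper and continuous, so it is bounded below on $\mathfrak X_D$ and its restriction to the closed set $\mu^{-1}(y)$ attains its minimum on the compact sublevel sets. Therefore $r_{_D}(y)\le c$ forces the existence of $x\in\mu^{-1}(y)$ with $r_{_{\mathfrak X_D}}(x)\le c$; equivalently, $\{r_{_D}\le c\}=\mu\bigl(\{r_{_{\mathfrak X_D}}\le c\}\bigr)$. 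The right-hand set is the continuous image of a compact set, hence compact, so $r_{_D}$ is an exhaustion. (If one prefers sublevel sets to be closed rather than merely compact-image, one observes $\mu$ is proper on the relevant locus since $\nu$ is proper and $\mathcal M_D\to D$ factorizations are controlled; but the compact-image argument already gives what is needed.)

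For continuity, the upper semicontinuity of $r_{_D}$ is automatic from the fact that it is an infimum of the continuous functions $y\mapsto r_{_{\mathfrak X_D}}(x)$ pulled back along local sections: given $y_0$ and $x_0\in\mu^{-1}(y_0)$ realizing the minimum, choose (by the submersion property) a local holomorphic section $\sigma$ of $\mu$ near $y_0$ with $\sigma(y_0)=x_0$; then $r_{_D}(y)\le r_{_{\mathfrak X_D}}(\sigma(y))\to r_{_{\mathfrak X_D}}(x_0)=r_{_D}(y_0)$. For lower semicontinuity, suppose $y_n\to y_0$ with $r_{_D}(y_n)\to\ell<r_{_D}(y_0)$; pick $x_n\in\mu^{-1}(y_n)$ attaining the minimum, so $r_{_{\mathfrak X_D}}(x_n)=r_{_D}(y_n)$ is eventually bounded above, hence $\{x_n\}$ lies in a sublevel set of the proper function $r_{_{\mathfrak X_D}}$, which is compact; passing to a subsequence $x_n\to x_\infty$, continuity of $\mu$ gives $\mu(x_\infty)=y_0$ and continuity of $r_{_{\mathfrak X_D}}$ gives $r_{_{\mathfrak X_D}}(x_\infty)=\ell<r_{_D}(y_0)$, contradicting the definition of $r_{_D}(y_0)$ as an infimum over $\mu^{-1}(y_0)$. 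The main obstacle is precisely this lower-semicontinuity step: it is where properness of $r_{_{\mathfrak X_D}}$ is essential, since without it the minimizing points $x_n$ could escape to infinity in the fiber direction and the limiting argument would fail; everything else is formal. Finally, $K_0$--invariance of $r_{_D}$ is immediate: $K_0$ acts compatibly on $D$, $\mathcal M_D$, and $\mathfrak X_D$, the map $\mu$ is $K_0$--equivariant, and $r_{_{\mathfrak X_D}}=r_{_{\mathcal M_D}}\circ\nu$ is $K_0$--invariant because $r_{_{\mathcal M_D}}$ is; hence the fiberwise infimum defining $r_{_D}$ is unchanged under replacing $y$ by $k(y)$.
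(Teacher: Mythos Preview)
Your argument is correct and follows essentially the same route as the paper: both proofs rest on the properness of $r_{_{\mathfrak X_D}}$ (to guarantee that fiberwise minima are attained and that minimizing sequences stay in compact sets) together with the openness of $\mu$ (equivalently, existence of local sections) to control the upper bound. The only difference is organizational: you separate continuity into upper and lower semicontinuity and phrase the exhaustion property via compactness of sublevel sets, whereas the paper runs a single sequential argument for continuity and checks the exhaustion property on divergent sequences; the underlying ideas are identical.
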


\begin {proof}
Since $r_{_{\mathfrak X_D}}$ is $K_0$--invariant and 
$\mu :\mathfrak X_D\to D$ is $K_0$--equivariant,
it is immediate that $r_{_D}$ is $K_0$--invariant.
\medskip

To prove that $r_{_D}$ is continuous, we let $\{y_n\} \to y$
in $D$.  Since $r_{_{\mathfrak X_D}}$ is a continuous exhaustion,
$r_{_D}(y_n)=r_{_{\mathfrak X_D}}(x_n)$ for some $x_n\in \mu ^{-1}(y_n)$
and we may assume that $\{x_n\} \to x$.  
This is due to the fact that if $\{x_n\}$ is unbounded in $\mathfrak X_D$,
then $\nu (x_n)$ is unbounded in $\mathcal M_D$ and as a result
$$
r_D(y_n)=r_{\mathfrak X_D}(x_n)=r_{\mathcal M_D}\circ \nu (x_n)\to \infty
$$
On the other hand, $\overline {\mathrm {lim}}r_D(y_n)$ is bounded by
$r_D(y)$.  For similar reasons, if  $r_{_D}(y)$ were not
$r_{_{\mathfrak X_D}}(x)$, then there would exist $\tilde x\in \mu^{-1}(y)$
with $r_{_D}(y)=r_{_{\mathfrak X_D}}(\tilde x) < r_{_{\mathfrak X_D}}(x)$.  
But $\mu $ is an open map and, contrary to assumption, we would be 
able to find $\tilde x_n\in \mu ^{-1}(y_n)$ with 
$r_{_{\mathfrak X_D}}(\tilde x_n) < r_{_{\mathfrak X_D}}(x_n)$.  Thus $r_{_D}$
is continuous. 
\medskip

To see that $r_D$ is an exhaustion, let $\{y_n\}$ be a
divergent sequence in $D$ and choose $\{x_n\}$ with
$x_n\in \mu^{-1}(y_n)$ and $r_D(y_n)=r_{\mathfrak X_D}(x_n)$.
It follows that $\{x_n\}$ is divergent as well, and since
$r_{\mathfrak X_D}$ is an exhaustion, $r_D(y_n)\to \infty$.
\end {proof}

\section {Pseudoconvexity}\label{sec5}

Since $r_{_D}$ is only known to be continuous, it doesn't make
sense to discuss its Levi form at a point of the boundary
of $D_r:= \{r_{_D} < r\}$. In the case of continuous plurisubharmonic 
exhaustions this causes no difficulties. However, as
is pointed out in \cite {ES}, subtle difficulties arise
in the $q$--convex case. 
\medskip

Let us say that a continuous exhaustion $h:D\to \mathbb R$
is $q$--pseudoconvex if every $p\in D$ has an open neighborhood
$U$ equipped with a smooth function $\tilde h$ with
$\tilde h(p)=h(p)$, $\tilde h\leqq h\vert U$, and the Levi form
$\mathcal L(\tilde h)=\frac{i}{2}\partial \bar \partial \tilde h$
has at least $n-q$ positive eigenvalues at every point of $U$.
\medskip

In particular the region $\{\tilde h<\tilde h(p)\}$ contains 
$D\cap U$ and has a
common boundary point with it at $p$, and near $p$ it has smooth
q--convex boundary.  In this sense our definition of a continuous
q--pseudoconvex function can be regarded as being natural.
For the purposes of smoothing, so that for example the theorems of
Andreotti and Grauert can be directly applied, it may be necessary to
require a somewhat stronger property.  It is quite possible that our
exhaustion is indeed more regular  than we have shown, but proving 
this in detail will require further study.

\begin {proposition}
The exhaustion $r_{_D}: D \to \mathbb R$ is $q$--pseudoconvex.
\end {proposition}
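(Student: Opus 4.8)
The plan is to verify the local condition of the definition at an arbitrary point $p\in D$ by producing, from the incidence data, an explicit smooth minorant of $r_{_D}$ which realizes the value $r_{_D}(p)$ there and whose Levi form has at least $n-q$ positive eigenvalues.

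First I would exploit the fact that $r_{_{\mathfrak X_D}}=r_{_{\mathcal M_D}}\circ\nu$ is a \emph{proper} exhaustion of $\mathfrak X_D$: its restriction to the closed submanifold $\mu^{-1}(p)$ is again an exhaustion, so the infimum defining $r_{_D}(p)$ is attained, at a point $x_{0}=(p,C)$, and $C\in\mathcal M_D$ is a cycle through $p$ with $r_{_{\mathcal M_D}}(C)=r_{_D}(p)$. Since $r_{_{\mathcal M_D}}$ is, by construction, the maximum over the \emph{compact} parameter set ($K_0$ together with the finitely many components of $S\cap D$) of the smooth functions $r_{_{k(S)}}\circ\pi_{_{k(\Sigma)}}$, this maximum is attained at $C$; I fix a corresponding Schubert slice $\Sigma$, with ambient Schubert variety $S$, and put $w_{0}:=\pi_{_\Sigma}(C)=C\cap\Sigma\in\mathcal O_S$. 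The function $\rho:=r_{_S}\circ\pi_{_\Sigma}$ on $\mathcal M_D$ is then smooth — it is the pull-back of the real-analytic strictly plurisubharmonic function $r_{_S}$ on the $(n-q)$-dimensional cell $\mathcal O_S$ along the holomorphic submersion $\pi_{_\Sigma}$ — it satisfies $\rho\leqq r_{_{\mathcal M_D}}$ with equality at $C$, and its Levi form is positive semidefinite with exactly $n-q$ positive eigenvalues, the kernel being the tangent space to the $\pi_{_\Sigma}$-fibre through $C$, equivalently to the family of all cycles through $w_{0}$.

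The minorant I would take is the fibrewise infimum of $\rho\circ\nu$ over $\mu$, localized near $x_{0}$:
$$r_{_D}(y)\ \geqq\ \tilde r(y):=\inf\{\,\rho(C')\ :\ C'\in\mathcal M_D,\ y\in C',\ C'\ \text{near}\ C\,\}\,,$$
the inequality on a neighbourhood $U$ of $p$ following because $\rho\leqq r_{_{\mathcal M_D}}$ on all of $\mathcal M_D$ while, for $y$ near $p$, a cycle through $y$ minimizing $r_{_{\mathcal M_D}}$ stays near $C$ (upper semicontinuity of the minimizing set together with the exhaustion property). The first delicate point is that $\tilde r(p)=r_{_D}(p)$: here I would use that $C$ is simultaneously a minimum of $r_{_{\mathcal M_D}}$ along the submanifold $\nu(\mu^{-1}(p))$ of cycles through $p$ and a point where the smooth minorant $\rho$ touches $r_{_{\mathcal M_D}}$; comparing first-order behaviour forces $C$ to be a critical point of $\rho$ restricted to that submanifold, and a transversality statement for the Schubert fibration — the fibres of $\pi_{_\Sigma}$ meet the subvariety of cycles through $p$ generically transversally — makes this restriction strictly plurisubharmonic near $C$, so $C$ is a nondegenerate local minimum. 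Then $\tilde r(p)=\rho(C)=r_{_S}(w_{0})=r_{_{\mathcal M_D}}(C)=r_{_D}(p)$, and by the implicit function theorem the minimizing cycle $C_{y}$ varies smoothly with $y$, so $\tilde r$ is smooth on $U$.

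The hard part will be showing that $\mathcal L(\tilde r)$ has at least $n-q$ positive eigenvalues throughout $U$, since a fibrewise infimum of plurisubharmonic functions can in general collapse completely; this forces one to exploit the precise interplay of the two fibrations rather than soft arguments, because a local \emph{holomorphic} section of $\mu$ only produces a majorant of $\tilde r$ and hence an upper bound on $\mathcal L(\tilde r)$. My approach would be to write $\tilde r(y)=r_{_S}(\pi_{_\Sigma}(C_{y}))$, use the first-variation identity (the differential of $\rho\circ\nu$ vanishes along $\mu^{-1}(y)$ at the minimizer, so $d\tilde r$ is pulled back from $D$) to pin down the $1$-jet of $\tilde r$, and then estimate the second-order term directly from the plurisubharmonicity of $\rho$ and the non-degeneracy of the fibrewise minimum obtained above; the upshot should be that the at most $q$ directions in which positivity is lost are exactly those tangent to the cycle $C_{y}$ through $y$ — consistent with the facts that $r_{_D}$ is bounded above by $r_{_D}(p)$ along $C$ and is constant on $K_0$-orbits, so that $q$ such directions are unavoidable. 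Shrinking $U$ so that all of this is uniform then yields the required smooth $q$-convex minorant at $p$; as $p$ was arbitrary, $r_{_D}$ is $q$-pseudoconvex.
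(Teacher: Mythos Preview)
Your setup through the choice of $C$, $\Sigma$, $S$, and $\rho = r_{_S}\circ\pi_{_\Sigma}$ matches the paper. The divergence is in how you build the smooth minorant. You take the fiberwise infimum $\tilde r(y)=\inf_{C'\ni y,\,C'\ \text{near}\ C}\rho(C')$; the paper instead restricts $h_S := \rho\circ\nu$ to a single holomorphic transversal $T$ to $\mu^{-1}(p)$ inside $\mathfrak X_D$ and pushes it down via the biholomorphism $T\cong U\subset D$. For the paper the analytic content is then essentially free: $r_{_T}:=h_S|_T$ is smooth because $h_S$ is, and the $n-q$ positive Levi eigenvalues come from choosing $T$ to contain an $(n-q)$-dimensional closed complex submanifold $\Delta$ on which $\pi_{_\Sigma}\circ\nu$ is a biholomorphism onto an open subset of $\Sigma$ --- along $\Delta$ the function $r_{_T}$ is literally $r_{_S}$, hence strictly plurisubharmonic. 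The geometric point making such a $T$ available is that the $\mu$-fiber $\mu^{-1}(p)$ and the $\pi_{_\Sigma}$-fiber through $C$ coincide (both parameterize the cycles through a single point), so a transversal to one is automatically a transversal to the other.

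Your route, by contrast, has a genuine gap at the ``first delicate point''. From $\rho\leqq r_{_{\mathcal M_D}}$ with equality at $C$, together with the fact that $C$ minimizes $r_{_{\mathcal M_D}}$ along the submanifold of cycles through $p$, you try to extract that $C$ is a critical point of the smooth function $\rho$ along that submanifold. But the two hypotheses sit on opposite sides of the inequality and $r_{_{\mathcal M_D}}$ is only continuous, so no first-order comparison is available; nothing prevents $\rho$ from strictly decreasing along cycles through $p$ while $r_{_{\mathcal M_D}}$ increases. Without that critical-point statement you cannot conclude $\tilde r(p)=r_{_D}(p)$, and without a nondegenerate minimum you cannot invoke the implicit function theorem for smoothness of $\tilde r$. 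The Levi estimate for a fiberwise infimum --- which you correctly flag as the hard part --- is a second, independent difficulty, and your variation sketch is not yet an argument. You dismiss the holomorphic-section idea as giving only a majorant of $\tilde r$, but the paper never passes through $\tilde r$ at all: its $r_{_T}$ is compared directly with $r_{_D}$, bypassing exactly the obstacles you have set yourself.
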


\begin {proof}
Let $y\in \{r_{_D} = r\}$. Let $x\in \mathfrak X_D$ be such that
$r_{_D}(y)=r_{\mathfrak X_D}(x)$ and recall that $x=(y,C)$ with
$y\in C$. Now by definition
$r_{_{\mathfrak X_D}}(x)=r_{_{\mathcal M_D}}(C)$, and $r_{_{\mathcal M_D}}(C)$ 
can be computed as $r_{_S} \circ \pi_{_\Sigma} (C)$,
for some Schubert variety $S$ in the defining family and
some slice $\Sigma $. Since $r_{_{\mathcal M_D}}\geqq r_{_S}\circ \pi_{_\Sigma} $,
it follows that
$\{r_{_{\mathcal M_D}}\leqq r\}\subset \{r_{_S}\circ \pi _{_\Sigma} \leqq r\}$.
\medskip

Now lift $r_{_S} \circ \pi _{_\Sigma} $ and $r_{_D}$ to functions
$h_S$ and $h_D$ on $\mathfrak X_D$.  We consider these
functions in a small neighborhood $W$ of $x=(y,C)$.
Note that they agree on the $\mu $--fiber 
$\mathcal F:=\mu ^{-1}(y)$.  
Furthermore,
$h_D\geqq h_S$ on a neighborhood of $(y,C)$ and thus we may
assume that this holds on $W$.
\medskip

By shrinking $W$ if necessary, we may choose a
closed complex submanifold $T$ of $W$ which is transversal
to $\mathcal F$ and is mapped biholomorphically onto
its open image $U$ in $D$. Define $r_{_T}$ on $U$ by restricting
$h_S$ to $T$ and using the identification $T=U$. Finally,
choose $T$ in such a way that it contains a closed 
$(n-q)$--dimensional complex submanifold $\Delta $ through $x$
so that the restriction of $\pi_{_\Sigma} \circ \nu $ to
$\Delta $ is biholomorphic onto an open subset of $\Sigma $.
These choices can be made, because $\mathcal F$ is both
the $\mu $--fiber over $y$ and the $\pi _{_\Sigma} $--fiber through $C$.
By construction $\{r_{_T}\leqq r\}$ contains $D_r\cap U$.
Furthermore, its restriction to the $\mu $-image of 
$\Delta $ can be regarded as the restriction of $r_{_S}$ to
the $\pi _{_\Sigma} \circ \nu $--image of $\Delta $.  Since
$r_{_S}$ is strictly plurisubharmonic, it follows that
$\mathcal L(r_T)$ has the desired number of positive
eigenvalues.
\end {proof}

\small
\begin {thebibliography} {X}

\bibitem{AnG}
A. Andreotti \& H. Grauert, Th\' eor\` emes de finitude pour la cohomologie
des espaces complexes,  Bull. Soc. Math France {\bf 90} (1962), 193--259.

\bibitem{B}
D. Barlet,
Convexit\'e de l'espace des cycles, Bull. Soc. Math. de France {\bf 106}
(1978), 373--397.

\bibitem{ES}
M. G. Eastwood \& G. V. Souria,
Cohomologically complete and pseudoconvex domains,
Comment. Math. Helv. {\bf 55} (1980), 413--426.

\bibitem{FHW}
G. Fels, A. T. Huckleberry \& J. A. Wolf,
``Cycle Spaces of Flag Domains:
A Complex Geometric Viewpoint.'' 
Progress in Mathematics, {\bf 245}. Birkh\" auser Boston, 
Boston, 2006.

\bibitem{HH}
J. Hong \& A. T. Huckleberry,
On closures of cycle spaces of flag domains,
Manuscripta Math. {\bf 121} (2006), 317--327. 

\bibitem{KS1}
B. Kr\"otz \& R. Stanton,
Holomorphic extension of representations, I:
Automorphic functions, Ann. of Math. {\bf 159} (2004), 641--724. 

\bibitem{KS2}
B. Kr\"otz \& R. Stanton,
Holomorphic extension of representations, II: geometry and harmonic analysis,
Geom. Funct. Anal. {\bf 15} (2005), 190--245.

\bibitem{SW}
W. Schmid \& J. A. Wolf, A vanishing theorem for open orbits on complex flag
manifolds, Proc. Amer. Math. Soc. {\bf 92} (1984), 461--464.

\bibitem{WeW}
R. O. Wells, Jr., \& J. A. Wolf,
Poincar\' e series and automorphic cohomology on flag domains.
Annals of Math. {\bf 105} (1977), 397--448.

\bibitem{W2}
J. A. Wolf,
The action of a real semisimple Lie group on a complex
manifold, {\rm I}: Orbit structure and holomorphic arc components,
Bull. Amer. Math. Soc. {\bf 75} (1969), 1121--1237.

\bibitem{W8}
J. A. Wolf,
The Stein condition for cycle spaces of open orbits on complex
flag manifolds,
Annals of Math. {\bf 136} (1992), 541--555.

\bibitem{W9}
J. A. Wolf,
Exhaustion functions and cohomology vanishing theorems for open orbits
on complex flag manifolds,
Math. Research Letters {\bf 2} (1995), 179--191.

\end {thebibliography}

\end {document}